\newtheorem{thm}{Theorem}[section]
\newtheorem{cor}[thm]{Corollary}
\newtheorem{lem}[thm]{Lemma}
\newtheorem{prop}[thm]{Proposition}
\newtheorem{rema}[thm]{Remark}
\newtheorem{defi}[thm]{Definition}
\theoremstyle{remark} }
\newcommand{\keyword}[1]{\textsf{#1}}
\title{A Classification of Elements of Function Space F($\mathbb{R}$,$\mathbb{R}$)}
\author{Mohsen Soltanifar\footnote{$^{1}$ \quad 
Analytics Division, College of Professional Studies, Northeastern University, 1400-410 West Georgia Street, Vancouver, BC V6B 1Z3, Canada \\
$^{2}$ \quad 
Biostatistics Division, Dalla Lana School of Public Health, University of Toronto, 620-155 College Street, Toronto, ON M5T 3M7, Canada\\
$^{3}$ \quad Biostatistics \& Programming Division, Biometrics Department, ClinChoice Inc, 750-2 Robert Speck Parkway, Mississauga, ON L4Z 1H8, Canada , E-mail: mohsen.soltanifar@alum.utoronto.ca}}
\date{\today}
\begin{document}
\maketitle
\doublespacing
\abstract{\noindent In this paper, we classify the function space of all real-valued functions on $\mathbb{R}$ denoted as $F(\mathbb{R},\mathbb{R})$ into 28 distinct blocks. Each block contains elements that share common features in terms of the cardinality of their sets of continuity and differentiability. Alongside this classification, we introduce the concept of the \emph{Connection}, which reveals a special relationship structure between four well-known real-valued functions in real analysis: the Cantor function, Dirichlet function, the Thomae function, and the Weierstrass function. Despite the significance of this field, several perspectives remain unexplored.}

\keyword{\textbf{Key Words:} Real-valued functions, Cardinals, Cantor function, Thomae function, Weierstrass function, Dirichlet function, Partition }

\textbf{MSC} 05A18,26A15,26A21,26A24,26A30,03E10 

\begin{center}
\emph{From the paradise, that Cantor created for us, no-one shall be able to expel us.\newline\\
\hspace{10 cm}David Hilbert-1925}
\end{center}

\section{Introduction}
\subsection{Real Valued Functions}
The theory of functions of a real variable was first treated by the Italian mathematician Ulisse Dini in 1878 \cite{Medvedev1991}. This theory was constructed through the enlargement and deepening of set theory and was later developed separately and in parallel \cite{Medvedev1991}. The development of this theory can be divided into three periods: the first period (1867-1902) saw extensive investigation into various topics of classical analysis, such as integrals, derivatives, and point set theory; the second period (1902-1930) marked the solidification of the theory of functions of a real variable as an independent mathematical discipline; and, the third period (1930-present) is characterized by the study of the theory of real-valued functions in connection with functional analysis \cite{Medvedev1991}.\par 

\subsection{Motivation}
"The function space of all real-valued functions on the real line ($F(\mathbb{R},\mathbb{R})$) is an infinite-dimensional vector space with a nonconstructive basis. It encompasses various types of functions with pathological and chaotic structures. As researchers' attention has shifted from pure existential mathematics to constructive mathematics \cite{Waaldijk2005, Troelstra1988}, mathematicians have made numerous attempts to focus on special subsets of this vast vector space (e.g., all real-valued continuous functions \cite{Pugh2002}) or to classify this vector space using novel ancillary concepts. Examples of such classifications include those based on (i) set theory (surjective, injective, bijective, etc.), (ii) operators (additive, multiplicative, even, odd, etc.), (iii) topology (continuous, open, closed, etc.), (iv) properties concerning real numbers (differentiable, smooth, convex, etc.), and (v) measurability (Borel status, Baire status, etc.) \cite{Hairer2008, Stillwell2013, Royden2023}.\par 

This work presents another attempt to describe the vector space of real-valued functions on the real line in a constructive manner. Specifically, it aims to classify the elements of this vector space using their associated information on cardinality, continuity, and differentiability. Furthermore, based on this classification, it establishes a particular relationship among four of them: the Cantor function, the Dirichlet function, the Thomae function, and the Weierstrass function."\par   

\subsection{Study Outline}
This paper is divided into four sections. The first section provides the necessary preliminaries in set theory, linear algebra, and special functions, which are essential for the following sections. In the second section, we discuss the partition of $F(\mathbb{R},\mathbb{R})$ into 28 blocks of functions, presenting constructive examples for each block. The third section focuses on the relationship between these 28 blocks of functions using graph theory, with a specific emphasis on the four main plausible functions. Finally, we conclude the work with a discussion section on the current results and future directions.\par 

\section{Preliminaries}
Readers who have studied the key topics of Analysis and Linear Algebra are well-equipped with the following notations, definitions, and results in the areas of "Set Theory" \cite{LinLin1981, Soltanifar2006a, Soltanifar2006b}, "Linear Algebra" \cite{Lipshutz1981, LinLin1981, Ventre2023, Aron.etal.2004}, and "Special Functions" \cite{Dunham2018, Bass2013, Gelbaum2003, Bourchtein2014}.\par

\subsection{Set Theory}

\begin{prop}\label{prop.card} Let $\mathbb{R}$ denote the set of real numbers and $A\subseteq \mathbb{R}.$ Then, either $A$ is empty, non-empty finite, denumerable, or  uncountable. In these cases, we denote the cardinality of $A$ by $0, n(n\in\mathbb{N}), \aleph_0,$ or $c,$ respectively. 
\end{prop}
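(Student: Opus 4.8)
The plan is to obtain the stated four-way alternative from two successive dichotomies, working entirely at the level of elementary cardinal arithmetic. First I would invoke the definition of finiteness: $A$ is either \emph{finite}, i.e.\ equipotent with $\{1,2,\dots,n\}$ for some integer $n\ge 0$, or else \emph{infinite}. In the finite case the split is immediate --- either $A=\varnothing$, whose cardinality we record as $0$, or $A\neq\varnothing$, in which case $|A|=n$ for a \emph{unique} $n\in\mathbb{N}$, the uniqueness being precisely the pigeonhole principle (for $m\neq n$ there is no bijection $\{1,\dots,m\}\to\{1,\dots,n\}$). This settles the first two cases without any reference to $\mathbb{R}$.

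Next I would treat the case that $A$ is infinite. Here the dichotomy ``denumerable versus uncountable'' is essentially a matter of definition: \emph{denumerable} means equipotent with $\mathbb{N}$, \emph{countable} means finite or denumerable, and \emph{uncountable} means not countable; hence an infinite set is denumerable or uncountable, and the two options are mutually exclusive. To justify the cardinal labels I would then show (i) $\aleph_0\le|A|$, by recursively choosing pairwise distinct points $a_1,a_2,\dots$ of $A$ --- possible precisely because $A$ is infinite, so that $A\setminus\{a_1,\dots,a_n\}$ is never empty --- which produces an injection $\mathbb{N}\hookrightarrow A$; and (ii) $|A|\le|\mathbb{R}|=c=2^{\aleph_0}$, using the inclusion $A\subseteq\mathbb{R}$ directly for the inequality and Cantor's theorem for the value of $|\mathbb{R}|$. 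Together with the Cantor--Schr\"oder--Bernstein theorem for comparing cardinalities, this shows that an infinite $A\subseteq\mathbb{R}$ satisfies $\aleph_0\le|A|\le c$, so $A$ is denumerable when $|A|=\aleph_0$ and uncountable when $\aleph_0<|A|$; combined with the finite case, the four possibilities are jointly exhaustive and pairwise disjoint.

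The only genuinely delicate point --- the step I expect to require a word of care rather than real work --- is the assignment of the single symbol $c$ to \emph{every} uncountable $A\subseteq\mathbb{R}$. What the argument above delivers unconditionally is $\aleph_0<|A|\le c$; to upgrade this to $|A|=c$ one must either invoke the Continuum Hypothesis, under which no cardinal lies strictly between $\aleph_0$ and $c$, or else restrict to sets enjoying the perfect-set property --- closed sets, and more generally Borel and analytic subsets of $\mathbb{R}$ --- each uncountable instance of which contains a copy of the Cantor set and hence has cardinality exactly $c$ in ZFC alone. I would therefore state the proposition with the convention that $c$ denotes the cardinality of the continuum and record this subtlety in a remark; since the set of continuity points of any function $\mathbb{R}\to\mathbb{R}$ is $G_\delta$ (hence Borel), the unconditional perfect-set version already covers the cases that matter in the sequel.
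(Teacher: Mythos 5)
Your argument is correct, and it is worth noting at the outset that the paper itself offers no proof of this proposition: it is stated as a preliminary with citations to set-theory textbooks, and the subsequent Remark simply \emph{assumes} that only the four types occur. Your two-stage dichotomy (finite/infinite, then denumerable/uncountable) together with the pigeonhole principle, the recursive extraction of an injection $\mathbb{N}\hookrightarrow A$ from an infinite $A$, and Cantor--Schr\"oder--Bernstein is the standard route and is sound. The genuinely valuable part of your write-up is the final paragraph: you are right that the four-way \emph{classification} (empty, non-empty finite, denumerable, uncountable) is exhaustive and disjoint in ZFC, but that attaching the single symbol $c$ to every uncountable $A\subseteq\mathbb{R}$ --- which the paper then uses arithmetically, e.g.\ in computing $Card([f_i])$ and in Proposition \ref{prop.propcard}(ii) where $Card(B)\leq c$ is asserted --- either requires the Continuum Hypothesis or a restriction to sets with the perfect-set property. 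Your observation that $C_f$ is $G_\delta$ (and $C_f^c$ is $F_\sigma$), hence Borel, so that the uncountable case unconditionally has cardinality $c$ for the continuity sets used later, repairs part of this gap in ZFC alone; note, however, that the differentiability sets $D_f$ of \emph{arbitrary} functions are not obviously Borel, so the paper's implicit reliance on CH (or on the convention that $c$ is merely a label for ``uncountable'') is not fully eliminated by the perfect-set argument. Making that caveat explicit, as you do, is more careful than the source.
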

\begin{rema}
Henceforth, we assume there are only four types of subsets in the real line given Proposition \ref{prop.card}, and considering all non-empty finite sets of one category with symbol $n$ as their cardinal number. 
\end{rema}

\begin{prop}{\ \\}\label{prop.propcard}
(i) Let $A\subseteq\mathbb{R}$ be uncountable and $A=A_1\dot\cup A_2$ Then, $A_1$ or $A_2$ is uncountable.\\
(ii) Let $A\subseteq B \subseteq \mathbb{R}.$ Then, $0\leq Card(A)\leq Card(B)\leq c.$
\end{prop}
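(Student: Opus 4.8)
The plan is to dispatch the two parts independently; both are quick consequences of the trichotomy recorded in Proposition~\ref{prop.card} together with the elementary arithmetic of countable sets.

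For part (i) I would argue by contraposition. Assume that neither $A_1$ nor $A_2$ is uncountable. Then, by Proposition~\ref{prop.card}, each of $A_1$ and $A_2$ is empty, non-empty finite, or denumerable; in all of these cases it is countable. The key fact to invoke is that a union of two countable sets is countable: enumerating (possibly finitely, possibly with repetitions) $A_1 = \{a_1, a_2, \ldots\}$ and $A_2 = \{b_1, b_2, \ldots\}$, the assignment $2k-1 \mapsto a_k$, $2k \mapsto b_k$ defines a surjection from $\mathbb{N}$ onto $A_1 \cup A_2$, so $A = A_1 \dot\cup A_2$ is countable. This contradicts the hypothesis that $A$ is uncountable, and hence at least one of $A_1, A_2$ must be uncountable.

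For part (ii) I would use the inclusion maps. Since $A \subseteq B$, the map $x \mapsto x$ is an injection of $A$ into $B$, so by the definition of the order on cardinals $Card(A) \le Card(B)$; applying the same observation to $B \subseteq \mathbb{R}$ gives $Card(B) \le Card(\mathbb{R}) = c$. Since $\emptyset \subseteq A$, we also have $0 = Card(\emptyset) \le Card(A)$. Chaining these yields $0 \le Card(A) \le Card(B) \le c$.

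The only point that deserves care — and the mild obstacle here — is making sure the displayed chain of inequalities is genuinely meaningful: one must know that the four symbols $0, n, \aleph_0, c$ of Proposition~\ref{prop.card} are linearly ordered, and in the expected way, under the injection-based comparison of cardinals. I would settle this by noting that $\mathbb{N}$ embeds into $\mathbb{R}$ while $\mathbb{R}$ does not embed into $\mathbb{N}$ (Cantor's diagonal argument), and that any finite set embeds into $\mathbb{N}$; combined with the Cantor--Schröder--Bernstein theorem this pins down the order type of $\{0, n, \aleph_0, c\}$ as the chain $0 < n < \aleph_0 < c$, after which part (ii) is pure bookkeeping.
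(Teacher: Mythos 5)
Your proof is correct. The paper offers no proof of this proposition at all --- it is listed among the preliminaries as a standard fact the reader is assumed to know --- so there is nothing to compare against; your contrapositive argument for (i) (a union of two countable sets is countable) and your injection-based argument for (ii), together with the remark that the four cardinal symbols $0, n, \aleph_0, c$ are linearly ordered via Cantor--Schr\"oder--Bernstein, are exactly the standard arguments the paper implicitly relies on.
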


\begin{prop}\label{prop.propcantor}
Let $C$ be the ternary Cantor set, i.e $C=\{x\in [0,1] | x:=\sum_{n=1}^{\infty} \frac{a_n}{3^n}: a_n=0,2 (n\in \mathbb{N})  \}$. Then, one can write $C=\cap_{n=1}^{\infty} C_n$ where $C_n$ is the disjoint union of $2^n$ intervals of the form $I_{n,k}:=[a_{n,k},b_{n,k}]\ \ (1\leq k\leq n)$ each of the length $3^{-n}, (n\geq 1).$  
\end{prop}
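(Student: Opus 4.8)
The plan is to prove Proposition~\ref{prop.propcantor} directly from the standard construction of the Cantor set, verifying each claimed feature (the nested intersection, the count $2^n$, the interval structure, and the length $3^{-n}$) by induction on $n$. First I would set $C_0 := [0,1]$ and define $C_{n+1}$ recursively from $C_n$ by removing the open middle third of every one of the closed intervals comprising $C_n$; I would record the claim to be shown at stage $n$ as: $C_n$ is a disjoint union of $2^n$ closed intervals $I_{n,k}=[a_{n,k},b_{n,k}]$, $1\le k\le 2^n$, each of length $3^{-n}$. The base case $n=0$ (or $n=1$ after the first deletion) is immediate. For the inductive step, each interval $I_{n,k}$ of length $3^{-n}$ is split by deleting its open middle third into two closed intervals $[a_{n,k},a_{n,k}+3^{-(n+1)}]$ and $[b_{n,k}-3^{-(n+1)},b_{n,k}]$, each of length $3^{-(n+1)}$; this doubles the count to $2^{n+1}$ and the disjointness is preserved because the new intervals sit inside the old disjoint ones and are themselves separated by the deleted middle third.

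Next I would identify $\cap_{n=1}^{\infty} C_n$ with the set $C$ defined via ternary expansions. The containment $C\subseteq\cap_n C_n$ follows because a point with a ternary expansion using only digits $0$ and $2$ never falls in a deleted middle third (the deleted thirds at stage $n$ are exactly the points whose $n$-th ternary digit is forced to be $1$). Conversely, a point in $\cap_n C_n$ lies, for each $n$, in some $I_{n,k}$, and tracking which of the two sub-intervals it enters at each stage produces a ternary expansion with digits in $\{0,2\}$; I would be slightly careful here about endpoints, which admit two ternary expansions, noting that in every such case at least one of the two expansions uses only $0$'s and $2$'s, so the point still belongs to $C$. This gives $C=\cap_{n=1}^\infty C_n$.

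The main obstacle I anticipate is not conceptual but bookkeeping: pinning down the correspondence between the combinatorial index $k$ of the interval $I_{n,k}$ and the string of ternary digits, and handling the two-expansion ambiguity at the endpoints $a_{n,k},b_{n,k}$ cleanly, so that the ternary-digit characterization of $C$ and the geometric nested-interval description genuinely coincide. A clean way to sidestep most of this is to phrase the endpoint discussion once, at the level of "every deleted open interval at stage $n$ is of the form $(\,\sum_{j<n}\frac{a_j}{3^j}+\frac{1}{3^n},\ \sum_{j<n}\frac{a_j}{3^j}+\frac{2}{3^n}\,)$ with $a_j\in\{0,2\}$," from which both directions of the set equality drop out. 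I would also remark that the statement's index range "$1\le k\le n$" should read "$1\le k\le 2^n$," matching the count of $2^n$ intervals asserted in the same sentence; I would use the corrected range throughout the proof.
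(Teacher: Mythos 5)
Your proposal is correct, but note that the paper itself offers no proof of this proposition: it is stated in the Preliminaries as a known result, with the burden carried by the cited references on set theory and Cantor sets. Your argument is the standard one those references use --- an induction establishing that $C_n$ consists of $2^n$ disjoint closed intervals of length $3^{-n}$ obtained by deleting open middle thirds, followed by the identification of $\cap_{n=1}^{\infty}C_n$ with the ternary-digit description, handling the two-expansion ambiguity at endpoints by observing that each such point admits an expansion using only the digits $0$ and $2$. Your device of characterizing every deleted open interval as $\left(\sum_{j<n}\frac{a_j}{3^j}+\frac{1}{3^n},\ \sum_{j<n}\frac{a_j}{3^j}+\frac{2}{3^n}\right)$ with $a_j\in\{0,2\}$ is a clean way to get both inclusions at once. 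You are also right that the index range in the statement should read $1\leq k\leq 2^n$ rather than $1\leq k\leq n$; this is a typo in the paper, since the same sentence asserts that $C_n$ is a union of $2^n$ intervals.
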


\begin{rema}\label{CantorUncSubset}
Given $\mathbb{C}_{unc}=\{ A\subseteq C| Card(A)=c \}$ we have:  $Card(\mathbb{C}_{unc})=2^c.$
\end{rema}

\subsection{Linear Algebra }

\begin{defi}\label{def.funspace} Let $\mathbb{R}$ denote the set of real numbers. We define (i) $F(\mathbb{R},\mathbb{R})=\{ f:\mathbb{R}\rightarrow\mathbb{R} | f\ is\ a\ function.\};$ (ii) $C(\mathbb{R},\mathbb{R})=\{ f:\mathbb{R}\rightarrow\mathbb{R} | f\ is\ a\ continuous\  function\  everywhere.\}$; and,  
(iii) $D(\mathbb{R},\mathbb{R})=\{ f:\mathbb{R}\rightarrow\mathbb{R} | f\ is\ a\ differentiable\  function\  everywhere.\}$

\end{defi}

\begin{rema}
The function space $F(\mathbb{R},\mathbb{R})$ equipped with conventional addition $+$ and scalar multiplication $\bullet$ constitutes the vector space $(F(\mathbb{R},\mathbb{R}), \mathbb{R},+,\bullet)$ . In addition, the vector spaces $(C(\mathbb{R},\mathbb{R}), \mathbb{R},+,\bullet)$, and $(D(\mathbb{R},\mathbb{R}), \mathbb{R},+,\bullet)$ are its sub-spaces.    
\end{rema}

\begin{prop} \label{prop.funcspace.relat} 
 Given the sets introduced in Definition\ref{def.funspace} we have: (i)  $  D(\mathbb{R},\mathbb{R})\subsetneq C(\mathbb{R},\mathbb{R}) \subsetneq F(\mathbb{R},\mathbb{R});$
 (ii)  $Card(F(\mathbb{R},\mathbb{R}))=2^c;$
 (iii) $Card(C(\mathbb{R},\mathbb{R}))=c;$ and, (iv) $Card(D(\mathbb{R},\mathbb{R}))=c.$    
 \end{prop}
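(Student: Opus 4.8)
The plan is to treat the four assertions in the order (i)$\to$(ii)$\to$(iii)$\to$(iv), because the inclusion $D(\mathbb{R},\mathbb{R})\subseteq C(\mathbb{R},\mathbb{R})$ from (i) and the value $Card(C(\mathbb{R},\mathbb{R}))=c$ from (iii) will both feed into (iv). For (i), the two inclusions are routine: differentiability at a point forces continuity there, since $\lim_{x\to a}\bigl(f(x)-f(a)\bigr)=\lim_{x\to a}\frac{f(x)-f(a)}{x-a}\cdot(x-a)=f'(a)\cdot 0=0$, and every continuous function is trivially an element of $F(\mathbb{R},\mathbb{R})$. Strictness is witnessed by explicit functions: $x\mapsto |x|$ lies in $C(\mathbb{R},\mathbb{R})\setminus D(\mathbb{R},\mathbb{R})$ (continuous on $\mathbb{R}$, with no derivative at $0$ because the one-sided difference quotients are $+1$ and $-1$), and the Dirichlet function $\chi_{\mathbb{Q}}$ lies in $F(\mathbb{R},\mathbb{R})\setminus C(\mathbb{R},\mathbb{R})$ (nowhere continuous, since $\mathbb{Q}$ and $\mathbb{R}\setminus\mathbb{Q}$ are both dense in $\mathbb{R}$); a single jump such as $\chi_{[0,\infty)}$ would serve for the latter just as well.

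For (ii), identify $F(\mathbb{R},\mathbb{R})$ with $\mathbb{R}^{\mathbb{R}}$. For the lower bound, the assignment $A\mapsto \chi_A$ embeds $\mathcal{P}(\mathbb{R})$ into $F(\mathbb{R},\mathbb{R})$ (distinct sets have distinct indicators), so $2^{c}=Card(\mathcal{P}(\mathbb{R}))\le Card(F(\mathbb{R},\mathbb{R}))$. For the upper bound, cardinal arithmetic gives $Card(F(\mathbb{R},\mathbb{R}))=c^{c}=(2^{\aleph_0})^{c}=2^{\aleph_0\cdot c}=2^{c}$, using $\aleph_0\cdot c=c$. The Cantor--Schr\"oder--Bernstein theorem then yields $Card(F(\mathbb{R},\mathbb{R}))=2^{c}$.

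For (iii), the constant functions $x\mapsto r$ with $r\in\mathbb{R}$ give an injection $\mathbb{R}\hookrightarrow C(\mathbb{R},\mathbb{R})$, so $c\le Card(C(\mathbb{R},\mathbb{R}))$. For the reverse inequality the crucial point is that a continuous function on $\mathbb{R}$ is determined by its values on the dense set $\mathbb{Q}$: if $f,g\in C(\mathbb{R},\mathbb{R})$ agree on $\mathbb{Q}$ and $x\in\mathbb{R}$, choose rationals $q_n\to x$ and pass to the limit to obtain $f(x)=\lim_n f(q_n)=\lim_n g(q_n)=g(x)$. Hence the restriction map $f\mapsto f|_{\mathbb{Q}}$ is an injection of $C(\mathbb{R},\mathbb{R})$ into $\mathbb{R}^{\mathbb{Q}}$, and $Card(C(\mathbb{R},\mathbb{R}))\le c^{\aleph_0}=(2^{\aleph_0})^{\aleph_0}=2^{\aleph_0\cdot\aleph_0}=2^{\aleph_0}=c$. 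Cantor--Schr\"oder--Bernstein gives $Card(C(\mathbb{R},\mathbb{R}))=c$.

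For (iv), the same constant functions give $c\le Card(D(\mathbb{R},\mathbb{R}))$, while by part (i) the inclusion $D(\mathbb{R},\mathbb{R})\subseteq C(\mathbb{R},\mathbb{R})$ is an injection, so $Card(D(\mathbb{R},\mathbb{R}))\le Card(C(\mathbb{R},\mathbb{R}))=c$ by part (iii); Cantor--Schr\"oder--Bernstein closes the argument. Overall none of the steps is deep: the only places requiring genuine care are the cardinal-arithmetic reductions (all of which bottom out at $2^{\aleph_0}=c$, $(\lambda^{\mu})^{\nu}=\lambda^{\mu\nu}$, and $\aleph_0\cdot c=c$) and the density argument in (iii) — which is the sole place continuity is actually used and is the real reason the continuous and differentiable classes collapse to cardinality $c$ while $F(\mathbb{R},\mathbb{R})$ is strictly larger, a fact that also underlies Remark \ref{CantorUncSubset}.
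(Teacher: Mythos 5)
The paper states Proposition \ref{prop.funcspace.relat} in its Preliminaries as standard background, with citations to the linear-algebra and set-theory references, and gives no proof of its own; so there is no in-paper argument to compare yours against. Judged on its merits, your proof is correct and is the standard one: differentiability implies continuity together with the witnesses $|x|$ and $\chi_{\mathbb{Q}}$ for (i); the indicator-function injection of $\mathcal{P}(\mathbb{R})$ plus the computation $c^{c}=2^{c}$ for (ii); determination of a continuous function by its values on the dense set $\mathbb{Q}$ plus $c^{\aleph_0}=c$ for (iii); and the inclusion $D(\mathbb{R},\mathbb{R})\subseteq C(\mathbb{R},\mathbb{R})$ for (iv). One small quibble: your closing aside that this ``also underlies Remark \ref{CantorUncSubset}'' is off target --- that remark counts uncountable subsets of the Cantor set and rests on a power-set computation, not on the density/continuity argument --- but this does not affect the validity of your proof.
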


\begin{prop}\label{prop.funcspace.dim} 
$\dim(F(\mathbb{R},\mathbb{R}))=2^c.$
\end{prop}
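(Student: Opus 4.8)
The plan is to determine $\dim(F(\mathbb{R},\mathbb{R}))$ indirectly, by comparing it with the already-known cardinality $\mathrm{Card}(F(\mathbb{R},\mathbb{R})) = 2^c$ supplied by Proposition \ref{prop.funcspace.relat}(ii). First I would observe that $F(\mathbb{R},\mathbb{R})$ is infinite-dimensional: for instance the monomials $1, x, x^2, \dots$ form an infinite linearly independent subset, so any Hamel basis $B$ — which exists by the Axiom of Choice — is infinite. Write $\kappa := \mathrm{Card}(B) = \dim(F(\mathbb{R},\mathbb{R}))$; the goal is then to show $\kappa = 2^c$.

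The key step is the standard cardinal identity $\mathrm{Card}(V) = \max\{\dim(V), \mathrm{Card}(K), \aleph_0\}$ valid for any vector space $V$ over a field $K$. In our situation every $f \in F(\mathbb{R},\mathbb{R})$ has a unique representation as a finite linear combination $f = \sum_{i=1}^{m} \lambda_i b_i$ with $b_i \in B$ and $\lambda_i \in \mathbb{R} \setminus \{0\}$; hence $F(\mathbb{R},\mathbb{R})$ injects into the set of finite subsets of $B \times \mathbb{R}$. Since $\kappa$ is infinite, the number of such finite subsets is $\max\{\kappa, c\}$, which gives $\mathrm{Card}(F(\mathbb{R},\mathbb{R})) \le \max\{\kappa, c\}$. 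The reverse inequality is immediate, since $B$ injects into $F(\mathbb{R},\mathbb{R})$ and so does $\mathbb{R}$ via the scalar multiples of a fixed nonzero vector; therefore $\mathrm{Card}(F(\mathbb{R},\mathbb{R})) = \max\{\kappa, c\}$.

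Combining this with Proposition \ref{prop.funcspace.relat}(ii) yields $\max\{\kappa, c\} = 2^c$. By Cantor's theorem $c < 2^c$, so the maximum cannot be attained by $c$; consequently $\kappa = 2^c$, i.e. $\dim(F(\mathbb{R},\mathbb{R})) = 2^c$. The only genuinely delicate points are the appeal to the Axiom of Choice — both for the existence of a Hamel basis and for the well-definedness of $\dim$ as the common cardinality of all bases — and the infinite cardinal arithmetic ($\kappa\cdot\aleph_0 = \kappa$ and $\kappa\cdot c = \max\{\kappa,c\}$); everything else is bookkeeping. An alternative, more hands-on route would be to exhibit directly a linearly independent family of size $2^c$ inside $F(\mathbb{R},\mathbb{R})$, but this is considerably more cumbersome than the counting argument above, so I would not pursue it.
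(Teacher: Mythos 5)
Your argument is correct and complete. The paper itself offers no proof of this proposition: it is stated in the Preliminaries as a known fact, with the burden carried by the cited linear-algebra references (in particular the lineability literature), so there is no in-paper argument to compare yours against. Your route is the standard one: establish that the space is infinite-dimensional, use the unique finite-support representation over a Hamel basis to get $\mathrm{Card}(F(\mathbb{R},\mathbb{R})) = \max\{\kappa, c\}$ where $\kappa$ is the dimension, and then conclude $\kappa = 2^c$ from Proposition \ref{prop.funcspace.relat}(ii) and Cantor's theorem. All the steps check out, including the cardinal arithmetic (the set of finite subsets of $B \times \mathbb{R}$ has cardinality $\max\{\kappa, c\}$ since $\kappa$ is infinite) and the explicit acknowledgement of the Axiom of Choice. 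One small remark on your closing aside: the ``hands-on'' alternative is less cumbersome than you suggest, since the indicator functions of an independent family of $2^c$ subsets of $\mathbb{R}$ (which exists by the Fichtenholz--Kantorovich--Hausdorff theorem) are linearly independent, giving a direct lower bound $\dim \geq 2^c$; but your counting argument is cleaner and entirely sufficient.
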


\subsection{Special Functions}

\begin{defi}\label{mainfunctions}
Given indicator function  $1_A (A\subseteq\mathbb{R}),$ and the ternary Cantor set $C.$ Then, the Cantor function $C(.),$ the Dirichlet function $D(.),$ the Thomae function $T(.),$ and the Weierstrass function $W(.)$ are defined on closed unit interval as:\par 
\begin{eqnarray}
C(x:=\sum_{n=1}^{\infty}\frac{a_{n,x}}{3^n})&=&\frac{1}{2^{N_x}}+\sum_{n=1}^{N_x-1}\frac{a_{n,x}}{2^{n+1}}: N_x=\min \{n\in\mathbb{N}: a_{n,x}=1 \},\\
D(x)&=& 1_{\mathbb{Q}}(x),  \\
T(\frac{m}{n}1_{\mathbb{Q}}(x:=\frac{m}{n})+x1_{\mathbb{Q}^c}(x))&=&\frac{1}{n}1_{\mathbb{Q}}(x):   (m,n)=1,  \\
W(x)&=& \sum_{n=0}^{\infty}\frac{\cos(21^n \pi x)}{3^n}.
\end{eqnarray}
\end{defi}
\begin{rema}
We note that the definitions of the above functions have straightforward extension from the closed unit interval to the entire real line. Also, the introduced  Weierstrass function here is special case of the general form for $a=\frac{1}{3},\ b=21.$
\end{rema}

\begin{defi}\label{minorfunctions}
Let $C$ be the ternary Cantor set, $D(.)$ be the Dirichlet function, and, $-\infty< a<b<\infty. $ Then, for  the triangular function $T^{(1)}(.)$ given by $T_{a,b}^{(1)}(x)=\sqrt{3}\Big(\frac{b-a}{2}-|x-\frac{b+a}{2}|\Big)1_{[a,b]}(x),$  and the transformed cosine function $T^{(2)}(.)$  given by   $T_{a,b}^{(2)}(x)=(b-a)*(1-cos(2\pi(\frac{x-a}{b-a})))1_{[a,b]}(x)$ we define two functions $f_C$ and $g_C$ on the real line as:\par
\begin{eqnarray}
f_C(x)&=& (\sum_{n=1}^{\infty}f_n(x))D(x): f_n(x)=\sum_{k=1}^{2^n}T_{a_{n,k},b_{n,k}}^{(1)}(x)1_{ [a_{n,k},b_{n,k}] }(x) \ (n\geq 1)\\
g_C(x)&=& (\sum_{n=1}^{\infty}g_n(x))D(x): g_n(x)=\sum_{k=1}^{2^n}T_{a_{n,k},b_{n,k}}^{(2)}(x)1_{ [a_{n,k},b_{n,k}]}(x)  \ (n\geq 1) .
\end{eqnarray}
\end{defi}
\begin{rema}
While the triangular function is continuous everywhere and has no derivative at points $x=a,b,$ the linear transformed cosine function is differentiable everywhere and in particular it derivative at points $x=a,b$ is zero. These properties will be inherited by the associated functions $f_C$ and $g_C,$ respectively. 
\end{rema}

\section{Main Results}
\subsection{Partition of $F(\mathbb{R},\mathbb{R})$ with Scenario Classification \& Examples}

\begin{thm}\label{theorem.partition}
The function space $F(\mathbb{R},\mathbb{R})$ may be partitioned into 28 unique distinct blocks of functions: 
\begin{eqnarray}
F(\mathbb{R},\mathbb{R}) &=& \dot\cup_{i=1}^{28}  [f_i] 
\end{eqnarray}
where in which 

\begin{eqnarray}
[f_i] &=& \{ f\in F(\mathbb{R},\mathbb{R})  | Card(C_f)=Card(C_{f_i}), Card(D_f)=Card(D_{f_i}) \}  (1\leq i\leq 28),\\
C_f &=& \text{the set of continuity points of the function f},\\
D_f &=& \text{the set of differentiability points of the function f}.
\end{eqnarray}
\end{thm}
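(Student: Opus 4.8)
The relation ``$f\sim g$ iff $Card(C_f)=Card(C_g)$ and $Card(D_f)=Card(D_g)$'' is visibly an equivalence relation on $F(\mathbb{R},\mathbb{R})$, so the decomposition into blocks $[f_i]$ is automatic; the assertion with content is that exactly $28$ blocks are non-empty. The plan is: (a) attach to each $f$ a finite cardinality invariant; (b) show by a purely finite case analysis, using only the inclusion $D_f\subseteq C_f$ and Proposition \ref{prop.propcard}, that at most $28$ invariants can occur; and (c) realize each occurring invariant by an explicit function built from the functions of Definitions \ref{mainfunctions} and \ref{minorfunctions}, so that all $28$ blocks are non-empty and the $f_i$ can be exhibited concretely.

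For (a)--(b), the bookkeeping that produces the number $28$ is the quadruple $\bigl(Card(C_f),Card(C_f^c),Card(D_f),Card(D_f^c)\bigr)$, each coordinate lying in $\{0,n,\aleph_0,c\}$ by Proposition \ref{prop.card}. Two constraints pin it down. First, differentiability forces continuity, so $D_f\subseteq C_f$; hence $Card(D_f)\le Card(C_f)$ and, passing to complements, $Card(C_f^c)\le Card(D_f^c)$ in the order $0<n<\aleph_0<c$ (Proposition \ref{prop.propcard}(ii)). Second, $\mathbb{R}=C_f\,\dot\cup\,C_f^c=D_f\,\dot\cup\,D_f^c$ is uncountable, so by Proposition \ref{prop.propcard}(i) at least one member of each of the pairs $\{C_f,C_f^c\}$ and $\{D_f,D_f^c\}$ has cardinality $c$. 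Enumerating the admissible quadruples according to the seven possibilities for $\bigl(Card(C_f),Card(C_f^c)\bigr)$ --- namely $(0,c),(n,c),(\aleph_0,c),(c,0),(c,n),(c,\aleph_0),(c,c)$ --- the number of compatible choices of $\bigl(Card(D_f),Card(D_f^c)\bigr)$ is $1,2,3,7,6,5,4$ respectively, and $1+2+3+7+6+5+4=28$. This is the upper bound, and it also shows the resulting profiles are pairwise inequivalent, cardinality being a genuine invariant.

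For (c), the non-emptiness, I would first record the ``extreme'' representatives: any polynomial gives $C_f=D_f=\mathbb{R}$; the Weierstrass function $W$ gives $C_f=\mathbb{R},\ D_f=\emptyset$; the Dirichlet function $D$ gives $C_f=\emptyset$; the Thomae function $T$ gives $C_f=\mathbb{Q}^c,\ D_f=\emptyset$; the Cantor function $C$ gives $C_f=\mathbb{R}$ with $D_f$ missing an uncountable subset of the ternary set, so $Card(D_f)=Card(D_f^c)=c$. The remaining profiles I would reach with a handful of operations: multiplication by $D$, which (as in the construction of $f_C$ and $g_C$) collapses the continuity set to the zero-set of the cofactor; multiplication by a factor vanishing to second order on a prescribed closed set $F$, e.g.\ $\mathrm{dist}(x,F)^2$ (taking $F=\{0\}$, $F=\mathbb{Z}$, or $F$ a fat Cantor set makes the sets of continuity and of differentiability coincide and have cardinality $n$, $\aleph_0$, or $c$ with uncountable complement); restriction to closed intervals via indicators; and gluing countably many rescaled translates of such gadgets on the intervals $[m,m+1]$, $m\in\mathbb{Z}$, which lets me tune each of $Card(C_f),Card(C_f^c),Card(D_f),Card(D_f^c)$ through $\{0,n,\aleph_0,c\}$ subject only to the two constraints above. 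The profiles requiring a continuity (resp.\ differentiability) set that is uncountable with uncountable complement together with a prescribed smaller differentiability set are exactly the ones for which the auxiliary functions $f_C,g_C$ were designed: Proposition \ref{prop.propcantor} and Remark \ref{CantorUncSubset} supply the Cantor scaffolding, the cornered triangular bumps of $f_C$ destroy differentiability precisely at the intended points while the $C^1$ bumps of $g_C$ preserve it, and the continuity set of each is read off from the endpoints of the approximating intervals $I_{n,k}$. Carrying this out for all $28$ profiles produces representatives $f_1,\dots,f_{28}$ and completes the argument.

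The genuine obstacle is the verification built into (c): that each constructed function is continuous, and differentiable, at \emph{exactly} the intended set and nowhere else. Producing enough points of continuity or differentiability (the lower bounds) is routine; the upper bounds --- no stray point of continuity at a ``seam'' between glued copies, no stray point of differentiability at a triangular corner or at an isolated point left over from a Dirichlet multiplication --- are where the Remarks following Definitions \ref{mainfunctions} and \ref{minorfunctions}, and uniform $\varepsilon$--$\delta$ estimates on the summable bump families, must be applied with care. A smaller subtlety is confirming that the ``tight'' profiles, those with $Card(C_f^c)=Card(D_f^c)\in\{n,\aleph_0\}$ (forcing $C_f=D_f$ off a finite or denumerable set), are actually attained --- here the indicator of a finite set, respectively of $\mathbb{Z}$, works, but one must still check it falls in the right block.
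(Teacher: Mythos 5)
Your proposal is correct and follows essentially the same route as the paper: the count of $28$ comes from the seven admissible pairs $\bigl(Card(C_f),Card(C_f^c)\bigr)$ forced by Proposition \ref{prop.propcard}(i), intersected with the cardinal constraints $Card(D_f)\le Card(C_f)$ and $Card(C_f^c)\le Card(D_f^c)$ from $D_f\subseteq C_f$, giving $1+2+\cdots+7=28$. The only difference is that you also sketch the non-emptiness of each block, which the paper defers to Theorem \ref{thm.examples} and Table \ref{Table3}; your candid identification of the exact-continuity/differentiability-set verifications as the real work there is accurate.
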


\begin{proof} 
First, let $f\in F(\mathbb{R},\mathbb{R}),$ and consider its set of continuity points $C_f$. Then, given $\mathbb{R}=C_f \dot\cup C_f^c,$ by an application of Proposition \ref{prop.propcard} (i) it follows that at least $C_f$ of $C_f^c$ in uncountable. Subsequently, by Proposition \ref{prop.propcard} (ii) there are seven different scenarios for the cardinality of the pair $(C_f,C_f^c)$ including $(0,c),(n,c),(\aleph_0,c),(c,c),(c,\aleph_0),(c,n),(c,0).$ A similar argument for the set of differentiabilities $D_f$ with seven blocks holds. Second, by multiplication principle, it appears that there are $7\times 7=49$ blocks of functions. However, by two applications of Proposition \ref{prop.propcard} (ii) for $A = D_f$ and $B = C_f,$ and, for $A = C_f^c$ and $B = D_f^c,$ only $(1+2+\cdots+7) = 28 $ blocks exist. Table \ref{Table2} enlists these blocks.\par 
\end{proof}

\begin{cor}
Each of the blocks of functions with most chaotic structure ($[f_1]$) and functions of the least chaotic structure ($[f_{28}]=D(\mathbb{R},\mathbb{R})$) constitutes only 3.5\% (1/28) of all blocks. Hence, 93\% of blocks of functions fall between these two opposite extremes. Furthermore, the vector space of everywhere continuous functions on the real line ($C(\mathbb{R},\mathbb{R})$) constitutes 25\% (7/28) of all blocks.
\end{cor}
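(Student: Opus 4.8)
The plan is to read all three assertions off the bookkeeping already carried out in the proof of Theorem \ref{theorem.partition}: each is simply a matter of naming which of the $28$ blocks is meant and then dividing by $28$. First I would make the block enumeration from that proof explicit by labelling a block with a pair $(j,k)$, where $j\in\{1,\dots,7\}$ is the position of $(\mathrm{Card}(C_f),\mathrm{Card}(C_f^c))$ in the list $(0,c),(n,c),(\aleph_0,c),(c,c),(c,\aleph_0),(c,n),(c,0)$ and $k\in\{1,\dots,7\}$ is the position of $(\mathrm{Card}(D_f),\mathrm{Card}(D_f^c))$ in the same list. The two inclusions invoked in Theorem \ref{theorem.partition}, namely $D_f\subseteq C_f$ and $C_f^c\subseteq D_f^c$, are exactly what collapses the $7\times 7=49$ nominal pairs to the triangular family $1\le k\le j\le 7$, giving the $(1+2+\cdots+7)=28$ blocks, all non-empty by the representatives collected in Table \ref{Table2}. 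Listing the blocks lexicographically in $(j,k)$, the first is $(1,1)$ and the last is $(7,7)$, which I would take to be $[f_1]$ and $[f_{28}]$ respectively.

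The first two claims then follow immediately. The block $[f_1]=(1,1)$ is the single block $\{f:\mathrm{Card}(C_f)=0,\ \mathrm{Card}(D_f)=0\}$ of nowhere-continuous, nowhere-differentiable functions — the Dirichlet function lies in it, since a function discontinuous everywhere is differentiable nowhere — so it is $1/28\approx 3.5\%$ of the partition. The block $[f_{28}]=(7,7)$ has $\mathrm{Card}(C_f^c)=0$ and $\mathrm{Card}(D_f^c)=0$, i.e.\ $C_f=D_f=\mathbb{R}$, which is precisely $D(\mathbb{R},\mathbb{R})$, again $1/28\approx 3.5\%$. Since $f_1$ has $\mathrm{Card}(C_f)=0$ whereas $f_{28}$ has $\mathrm{Card}(C_f)=c$, these two blocks are distinct, so they occupy $2/28$ of the partition and the complementary $26/28\approx 93\%$ of the blocks are neither extreme. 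For the last claim I would use that the continuity scenario $(\mathrm{Card}(C_f),\mathrm{Card}(C_f^c))=(c,0)$ holds if and only if $C_f^c=\emptyset$, i.e.\ $f\in C(\mathbb{R},\mathbb{R})$; hence $C(\mathbb{R},\mathbb{R})$ is exactly the union of the blocks with $j=7$, and as $1\le k\le 7$ imposes nothing further there are $7$ such blocks (from continuous-but-nowhere-differentiable, witnessed by the Weierstrass function, up to everywhere differentiable), that is $7/28=25\%$. A closing remark would observe that $1/28$, $26/28$, and $7/28$ round to the percentages stated.

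I do not expect a real obstacle here, since the corollary is arithmetic once the partition is in hand; the one point needing care is ensuring the proof of Theorem \ref{theorem.partition} pins the enumeration down unambiguously — so that ``most chaotic'' is indeed $(1,1)$, ``least chaotic'' is $(7,7)$, and ``everywhere continuous'' is the full $j=7$ row — and then citing Table \ref{Table2} for the non-emptiness of each named block. Everything beyond that is the identity $1+2+\cdots+7=28$ and elementary division.
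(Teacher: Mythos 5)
Your proposal is correct and follows essentially the same (implicit) route as the paper, which offers no separate proof for this corollary: everything is read off the enumeration in Theorem \ref{theorem.partition} and Table \ref{Table2}, with $[f_1]$ the $(0,c;0,c)$ block, $[f_{28}]$ the $(c,0;c,0)$ block, and $C(\mathbb{R},\mathbb{R})$ the union of the seven blocks numbered 22--28, after which the stated percentages are just $1/28$, $26/28$, and $7/28$.
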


\begin{thm}\label{thm.examples}
There is at least one constructive example representing each of 28 unique distinct blocks of functions in $F(\mathbb{R},\mathbb{R}).$    
\end{thm}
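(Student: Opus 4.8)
The plan is to produce, for each of the $28$ admissible continuity--differentiability scenario pairs isolated in the proof of Theorem~\ref{theorem.partition}, one concrete representative, and to collect the list in Table~\ref{Table2}. Label the seven scenarios for a pair $\bigl(\mathrm{Card}(A),\mathrm{Card}(A^{c})\bigr)$ as $1,\dots,7$, running from $(0,c)$ (the ``worst'') to $(c,0)$ (the ``best''), so that the $28$ blocks are exactly the pairs $(i,j)$ with $1\le i\le j\le 7$, where $i$ is the scenario of $(D_{f},D_{f}^{c})$ and $j$ that of $(C_{f},C_{f}^{c})$; the inequality $i\le j$ is forced by Proposition~\ref{prop.propcard}(ii) because $D_{f}\subseteq C_{f}$ and $C_{f}^{c}\subseteq D_{f}^{c}$. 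Five blocks are inhabited outright by the functions of Definition~\ref{mainfunctions}: the Dirichlet function $D(\cdot)$ is nowhere continuous and nowhere differentiable, hence represents $[f_{1}]$, i.e.\ scenario $(1,1)$; the Thomae function $T(\cdot)$ is continuous exactly on the irrationals and nowhere differentiable, giving $(1,5)$; the Weierstrass function $W(\cdot)$ is continuous everywhere and nowhere differentiable, giving $(1,7)$; the Cantor function $C(\cdot)$ is continuous everywhere and, by the classical analysis of its derivative (it has no finite derivative at any point of the ternary set $C$, whose cardinality is $c$; cf.\ Remark~\ref{CantorUncSubset}), fails to be differentiable on a set of cardinality $c$, giving $(4,7)$; and the identity map lies in $[f_{28}]=D(\mathbb{R},\mathbb{R})$, i.e.\ scenario $(7,7)$.

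For the remaining twenty-three blocks I would assemble a short toolkit. First, a \emph{Dirichlet-multiplier} device: for $p$ a polynomial or a power $\sin^{k}(\pi x)$, the product $p(x)D(x)$ is continuous at $x_{0}$ iff $p(x_{0})=0$, and differentiable at $x_{0}$ iff $p$ has a zero of order at least two at $x_{0}$; choosing $p$ with a prescribed finite or countably infinite zero set, with zeros of order one or two (e.g.\ $x$, $x^{2}$, $x^{2}\sin(\pi x)$, $\sin(\pi x)$, $\sin^{2}(\pi x)$), realises all six blocks with continuity scenario $\le 3$. Second, \emph{piecewise gluing}: split $\mathbb{R}$ into finitely many intervals, put a toolkit function on each, and choose the finitely many junction values appropriately; the continuity scenario of the result is then controlled by the junctions (a single jump makes $C_{f}^{c}$ finite, scenario $6$; gluing a globally continuous piece to a Thomae piece along a ray makes $C_{f}^{c}$ countable, scenario $5$; gluing to $D(\cdot)$ or a $\sin$-perturbed piece along a whole ray makes $C_{f}^{c}$ uncountable, scenario $4$), while its differentiability scenario is controlled by which function occupies the continuous part --- the identity for an everywhere-differentiable set, $|x|$ or $|\sin(\pi x)|$ for a co-finite or co-countable one, $d(x)^{2}W(1/d(x))$ with $d(x)=\mathrm{dist}(x,\mathbb{Z})$ (or $x^{2}W(1/x)$) for a countable or one-point one, $W(\cdot)$ for the empty one, and $C(\cdot)$ for one with uncountable complement. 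These account for the seven blocks of continuity scenario $7$ and, after gluing, the remaining blocks of continuity scenarios $4,5,6$. Third, as ``single-formula'' inhabitants of several continuity-scenario-$4$ blocks I would use $f_{C}$ and $g_{C}$ of Definition~\ref{minorfunctions}: since the triangular bumps $T^{(1)}$ are continuous but have corners (no derivative) at their endpoints while the transformed-cosine bumps $T^{(2)}$ are differentiable there with vanishing derivative, $f_{C}$ and $g_{C}$ carry these features over, and a direct computation shows their sets of continuity, non-continuity, differentiability, and non-differentiability all have cardinality $c$.

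The verification of each table entry then reduces to the routine pointwise analysis of continuity and differentiability, aided by two standard principles --- (a) $p(x)D(x)$ has the continuity/differentiability behaviour stated above; (b) at a point where both summands are defined, a sum is continuous iff both summands are and differentiable iff the non-differentiable summand is --- after which the cardinalities of $C_{f},C_{f}^{c},D_{f},D_{f}^{c}$ are read off from Propositions~\ref{prop.card} and~\ref{prop.propcard}. That no block is omitted is already guaranteed by the multiplication-principle count carried out in the proof of Theorem~\ref{theorem.partition}.

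I expect the genuine difficulty to lie in the ``middle'' blocks --- those pairing continuity scenario $4$ or $5$ with differentiability scenario $3$ or $4$ --- where one must certify that the chosen representatives (the bump functions $f_{C},g_{C}$, and the glued copies of the Cantor function) have \emph{exactly} the advertised continuity and differentiability sets: neither too large, since one must show that the superposition of the corners of infinitely many triangular bumps genuinely destroys the derivative at every endpoint of every Cantor interval even though each bump is individually harmless, and that the Cantor function has no finite derivative anywhere on $C$; nor too small, since one must show that on each removed open gap the functions vanish identically, hence are smooth there, so that the set of well-behaved points really does have cardinality $c$. Establishing these exact set identities --- essentially a local version of the counting behind Remark~\ref{CantorUncSubset} --- is the technical heart of the argument; the rest is bookkeeping over the twenty-eight cases.
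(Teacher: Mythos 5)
Your proposal is correct and follows essentially the same route as the paper, whose entire proof consists of exhibiting the explicit list of $28$ representatives in Table~\ref{Table3}: the Big Four for their own blocks, Dirichlet-multiplier products for the blocks with countable continuity set, the Cantor-set constructions $f_C,g_C$ for the continuity-scenario-$4$ blocks, and Weierstrass/Cantor/absolute-value pieces perturbed at finitely or countably many isolated points for the rest. Your added discussion of how one would verify the exact continuity and differentiability sets goes beyond what the paper records, but it does not change the argument.
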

\begin{proof} We provide the representative functions as listed in Table \ref{Table3}.
\end{proof}

\begin{center}
\begin{table}[H]
\caption{List of 28 representatives  blocks of partition of $F(\mathbb{R},\mathbb{R})$\label{Table2}}
\begin{threeparttable}[b] 
\centering 
\begin{adjustbox}{width=0.55\textwidth, height=0.55\textwidth}
\begin{tabular}{ll cc lcc} 
\hline 
&& \multicolumn{2}{c}{Continuity} & & \multicolumn{2}{c}{Differentiability}\\ [0.5ex]
\hline 
\#&Case&   $Card(C_f)$ & $Card(C_f^c)$&  &   $Card(D_f)$ & $Card(D_f^c)$\\[1ex] 
\hline 
1&1-1&     $0$ & $c$ &  &     $0$ & $c$\\[1ex] 
2&2-1&     $n$ & $c$ &  &     $0$ & $c$\\[1ex] 
3&2-2 &    $-$ & $-$&   &     $n$ & $c$  \\[1ex] 
4&3-1&     $\aleph_0$ & $c$ &  &     $0$ & $c$\\[1ex] 
5&3-2 &    $-$ & $-$&   &     $n$ & $c$  \\[1ex] 
6&3-3 &   $-$ & $-$ &  &    $\aleph_0$ & $c$\\[1ex]
7&4-1&     $c$ & $c$ &  &     $0$ & $c$\\[1ex] 
8&4-2 &    $-$ & $-$&  &     $n$ & $c$  \\[1ex] 
9&4-3 &   $-$ & $-$ & &    $\aleph_0$ & $c$\\[1ex]
10&4-4 &    $-$ & $-$ & &     $c$ & $c$\\[1ex]  
11&5-1&     $c$ & $\aleph_0$ & &     $0$ & $c$\\[1ex] 
12&5-2 &    $-$ & $-$&   &     $n$ & $c$  \\[1ex] 
13&5-3 &   $-$ & $-$ &  &    $\aleph_0$ & $c$\\[1ex]
14&5-4 &    $-$ & $-$ &  &     $c$ & $c$\\[1ex]  
15&5-5 &    $-$ & $-$ &  &  $c$ & $\aleph_0$\\[1ex] 
16&6-1&     $c$ & $n$ &  &     $0$ & $c$\\[1ex] 
17&6-2 &    $-$ & $-$&   &     $n$ & $c$  \\[1ex] 
18&6-3 &   $-$ & $-$ & &    $\aleph_0$ & $c$\\[1ex]
19&6-4&    $-$ & $-$ &  &     $c$ & $c$\\[1ex]  
20&6-5 &    $-$ & $-$ &  &  $c$ & $\aleph_0$\\[1ex] 
21&6-6 &   $-$ & $-$ &  &    $c$ & $n$\\[1ex] 
22&7-1&     $c$ & $0$ &  &     $0$ & $c$\\[1ex] 
23&7-2 &    $-$ & $-$&   &     $n$ & $c$  \\[1ex] 
24&7-3 &   $-$ & $-$ &  &    $\aleph_0$ & $c$\\[1ex]
25&7-4 &    $-$ & $-$ &  &     $c$ & $c$\\[1ex]  
26&7-5 &    $-$ & $-$ &  &  $c$ & $\aleph_0$\\[1ex] 
27&7-6 &   $-$ & $-$ & &    $c$ & $n$\\[1ex] 
28&7-7 &    $-$ & $-$ &  &     $c$ & $0$\\[1ex] 
\hline 
\end{tabular}
 \end{adjustbox} 
\end{threeparttable}
\end{table}
\end{center} 
\begin{center}
\begin{table}[H]
\caption{List of 28 representatives  functions for each of 28 blocks  $F(\mathbb{R},\mathbb{R})$\label{Table3}}
\begin{threeparttable}[b] 
\centering 
\begin{adjustbox}{width=0.55\textwidth, height=0.55\textwidth}
\begin{tabular}{ll lll} 
\hline 
\#&Case&   Representative $f(x)$ &  Comments & $Card([f])$\\[1ex] 
\hline 
1&1-1& $D(x)$  &  Dirichlet Function &  $2^c$ \\[1ex] 
2&2-1& $(\prod_{k=1}^{n}(x-k))D(x)$  &  & $2^c$\\[1ex] 
3&2-2 & $(\prod_{k=1}^{n}(x-k)^2)D(x)$ &  & $2^c$ \\[1ex] 
4&3-1& $\sin(\pi x) D(x)$ &  & $2^c$ \\[1ex] 
5&3-2 & $(\sin(\pi x)\prod_{k=1}^{n}(x-k)) D(x)$ & &$2^c$\\[1ex] 
6&3-3 & $(\sin^2(\pi x)) D(x)$ & & $2^c$\\[1ex]
7&4-1& $f_C(x)$  &  $C_f = C$ & $2^c$\\[1ex] 
8&4-2 & $(\prod_{k=1}^{n}(x-\frac{1}{3^k})^2)f_C(x)$ & & $2^c$ \\[1ex] 
9&4-3 & $(\sin^2(\frac{\pi}{x}))f_C(x)$ &  & $2^c$\\[1ex]
10&4-4 & $g_C(x)$ &  $C_f = D_f =  C$ & $2^c$\\[1ex]  
11&5-1& $T(x)$  &  Thomae Function& $c$\\[1ex] 
12&5-2 & $(\prod_{k=1}^{n}(x-k)^2)T(x)$ &  & $c$  \\[1ex] 
13&5-3 & $(\sin^2(\pi x))T(x)$ & & $c$\\[1ex]
14&5-4 & $T(x)1_{[0,1]}(x)$ & & $ c$ \\[1ex]  
15&5-5 & $\sum_{n=1}^{+\infty}\pi^n1_{\{\pi^n\}}(x)$ & & $c$ \\[1ex] 
16&6-1& $W(x)+\sum_{k=1}^{n}\pi^k1_{\{\pi^k\}}(x)$  & & $c$ \\[1ex] 
17&6-2 & $(\prod_{k=1}^{n}(x-k)^2)(W(x)+\sum_{k=1}^{n}\pi^k1_{\{\pi^k\}}(x))$ &   &$c$\\[1ex] 
18&6-3 &  $(\sin^2(\pi x))(W(x)+\sum_{k=1}^{n}\pi^k1_{\{\pi^k\}}(x))$ & & $c$\\[1ex]
19&6-4&  $(W(x)+\sum_{k=1}^{n}\pi^{-k}1_{\{\pi^{-k}\}}(x))1_{[0,1]}(x)$ & & $c$ \\[1ex]  
20&6-5 & $|\sin(\pi x)|+\sum_{k=1}^{n}\pi^k1_{\{\pi^k\}}(x)$ & & $c$  \\[1ex] 
21&6-6 & $\sum_{k=1}^{n}\pi^k1_{\{\pi^k\}}(x)$ & & $c$\\[1ex] 
22&7-1 & $ W(x)$ &  Weierstrass Function &$c$ \\[1ex] 
23&7-2 & $(\prod_{k=1}^{n}(x-k)^2)W(x)$ &   & $c$ \\[1ex] 
24&7-3 & $(\sin^2(\pi x))W(x)$ & & $c$\\[1ex]
25&7-4 & $C(x)$ & Cantor Function & $c$\\[1ex]  
26&7-5 & $|\sin(\pi x)|$ & &$c$  \\[1ex] 
27&7-6 &$|\prod_{k=1}^{n}(x-k)|$ & &$c$ \\[1ex] 
28&7-7 & $x$ &  &$c$\\[1ex] 
\hline 
\end{tabular}
\end{adjustbox} 
\end{threeparttable}
\end{table}
\end{center}

\begin{rema}
The block of functions with least chaotic structure ($[f_{28}]$) represented by the identity function $I$ in the Table \ref{Table3} enlists many well known functions including all polynomials $p_m(.) (m\geq 0)$, the trigonometric functions $sin(.),cos(.),$ the exponential function $\exp(.),$ and the Volterra's function \cite{Bressoud2008}.    
\end{rema}

\begin{rema}
The ternary Cantor set $C$ appears in the construction of representatives of $17.9\%(5/28)$ of the blocks. This shows that the ternary Cantor set has remarkable presence in the representative blocks of the space of real valued functions on the real line.
\end{rema}

\begin{rema}
The four well-known functions$C(.),D(.),T(.),$ and $W(.)$ show up in  $64.3\%(18/28)$ of the blocks where for given representative function $f_i= (h_{i1}+h_{i2})h_{i3}$ at least one of $h_{ij}  (j=1,2,3)$ is one of these four functions. \end{rema}

\begin{lem}\label{card.blockunion}
Let $F_1(\mathbb{R},\mathbb{R})=\cup_{i=1}^{10} [f_i]$ and  $F_2(\mathbb{R},\mathbb{R})=\cup_{i=11}^{28} [f_i]$. Then:\newline 
(i) $Card(F_1(\mathbb{R},\mathbb{R}))=2^c$\newline 
(ii) $Card(F_2(\mathbb{R},\mathbb{R}))=c.$   \end{lem}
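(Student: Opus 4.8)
The plan is to bound each union above and below by $2^c$ and $c$ respectively, relying on Proposition \ref{prop.funcspace.relat} and the cardinality entries already recorded in Table \ref{Table3}. For part (i), the upper bound is immediate: $F_1(\mathbb{R},\mathbb{R})\subseteq F(\mathbb{R},\mathbb{R})$, so $Card(F_1(\mathbb{R},\mathbb{R}))\le Card(F(\mathbb{R},\mathbb{R}))=2^c$ by Proposition \ref{prop.funcspace.relat}(ii). For the lower bound, it suffices to exhibit a single block among $[f_1],\dots,[f_{10}]$ of cardinality $2^c$; the natural choice is $[f_1]$, the block of the Dirichlet function, for which Table \ref{Table3} already asserts $Card([f_1])=2^c$. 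Since $[f_1]\subseteq F_1(\mathbb{R},\mathbb{R})$, Proposition \ref{prop.propcard}(ii) gives $2^c\le Card(F_1(\mathbb{R},\mathbb{R}))$, and the Cantor--Schröder--Bernstein theorem yields equality.

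For part (ii), the lower bound is again easy: each of $[f_{11}],\dots,[f_{28}]$ is nonempty, and for instance $[f_{28}]=D(\mathbb{R},\mathbb{R})$ has cardinality $c$ by Proposition \ref{prop.funcspace.relat}(iv), so $c\le Card(F_2(\mathbb{R},\mathbb{R}))$. The substantive direction is the upper bound $Card(F_2(\mathbb{R},\mathbb{R}))\le c$. The cleanest route is to show that every $f$ lying in one of the blocks $[f_{11}],\dots,[f_{28}]$ has an uncountable set of continuity points; indeed, inspecting the $Card(C_f)$ column of Table \ref{Table2}, cases $5$ through $7$ all have $Card(C_f)=c$, so $C_f$ is uncountable, hence dense in some interval — more precisely $C_f^c$ is countable. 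The key analytic fact to invoke is that a function continuous on the complement of a countable set is determined by its values on that dense $G_\delta$ set together with the (countably many) values at its discontinuities: restricting $f$ to $C_f$ gives a continuous function on a subspace of $\mathbb{R}$, and such functions number at most $c$ (a continuous function on any subset of $\mathbb{R}$ is determined by its values on a countable dense subset of that subset, giving at most $c^{\aleph_0}=c$ possibilities), while the data on the countable set $C_f^c$ contributes at most $c^{\aleph_0}=c$ further choices. Multiplying, $Card(F_2(\mathbb{R},\mathbb{R}))\le c\cdot c=c$.

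The main obstacle is making the counting argument for the upper bound in (ii) rigorous across all eighteen blocks simultaneously, since $C_f$ varies with $f$ and is not a fixed domain. I would handle this by the following uniform scheme: for each $f\in F_2(\mathbb{R},\mathbb{R})$ the set $S_f:=C_f^c$ is countable, so it belongs to the set of countable subsets of $\mathbb{R}$, of which there are $c^{\aleph_0}=c$. Fix such a countable set $S$; the functions $f$ with $C_f^c=S$ are specified by (a) an arbitrary function $S\to\mathbb{R}$, of which there are at most $c^{\aleph_0}=c$, and (b) a continuous function on $\mathbb{R}\setminus S$, of which there are at most $c$ because $\mathbb{R}\setminus S$ is separable and a continuous real function on a separable space is determined by its restriction to a countable dense subset ($c^{\aleph_0}=c$ again). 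Hence the fiber over each $S$ has size at most $c\cdot c=c$, and summing over the $c$ admissible sets $S$ gives $Card(F_2(\mathbb{R},\mathbb{R}))\le c\cdot c=c$. Combined with the lower bound and Cantor--Schröder--Bernstein, this closes part (ii). A secondary point worth a sentence is to confirm that the blocks in $F_2$ are exactly those with $Card(C_f)=c$ and hence $Card(C_f^c)\in\{0,n,\aleph_0\}$, which one reads directly off Table \ref{Table2}; this is what licenses the claim that $C_f^c$ is countable in every relevant block.
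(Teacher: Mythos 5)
Your part (ii) is essentially the paper's own argument: the paper also notes that every $f$ in $[f_{11}],\dots,[f_{28}]$ has countable discontinuity set, encodes $f$ as the pair $(f|_{C_f^c}, f|_{C_f})$ in $F(\mathbb{N},\mathbb{R})\times \bigcup_{X_{count}}C(X_{count}^c,\mathbb{R})$, and bounds that product by $c$ using the fact (cited to Hodel) that $Card(C(X_{count}^c,\mathbb{R}))=c$; your fiber-over-$S$ bookkeeping is just a more explicit write-up of the same injection, and is if anything cleaner about why the target has cardinality $c$. The one place you diverge is part (i): the paper gets it for free from (ii) via cardinal arithmetic, since $F(\mathbb{R},\mathbb{R})=F_1\dot\cup F_2$ forces $Card(F_1)+Card(F_2)=2^c$, and $Card(F_2)=c<2^c$ then leaves $Card(F_1)=2^c$. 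You instead prove (i) by a direct lower bound through $[f_1]$, which is fine in principle but as written leans on the entry $Card([f_1])=2^c$ in Table \ref{Table3}; in the paper that entry is only \emph{established} later, in Theorem \ref{thm.blockcard}, so you should either supply the witness yourself (e.g., the $2^c$ functions obtained by extending an arbitrary $f\in F(\mathbb{Q}^c,\mathbb{R}_0^-)$ by $f(\mathbb{Q})=\{1\}$, all of which are nowhere continuous) or switch to the paper's cardinal-arithmetic deduction of (i) from (ii). Also, your aside that an uncountable $C_f$ is ``dense in some interval'' is not a valid inference and not needed; the countability of $C_f^c$ is read directly from the cases $5$--$7$ of Table \ref{Table2}, as you correctly note at the end.
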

\begin{proof}
First, given $F(\mathbb{R},\mathbb{R})=F_1(\mathbb{R},\mathbb{R})\dot\cup F_2(\mathbb{R},\mathbb{R}),$ we have $2^c=Card(F_1(\mathbb{R},\mathbb{R}))+Card(F_2(\mathbb{R},\mathbb{R})).$ Hence, proving claim (ii) yields proving claim (i).
Second, to prove claim (ii) let $X_{count}\subset \mathbb{R}$ be countable. Then, given $Card(C(X_{count}^c,\mathbb{R}))=c$ \cite{Hodel1984} we have $Card(\cup_{X_{count}\subset \mathbb{R}}C(X_{count}^c,\mathbb{R}))=c.$ Consequently, 
\begin{equation}\label{prod.cardinal}
Card(F(\mathbb{N},\mathbb{R})\times \cup_{X_{count}\subset \mathbb{R}}C(X_{count}^c,\mathbb{R}))=c.     
\end{equation}
Finally, using the result in equation \ref{prod.cardinal} it is sufficient to consider the mapping $F_2(\mathbb{R},\mathbb{R}) \mapsto (F(\mathbb{N},\mathbb{R})\times \cup_{X_{count}\subset \mathbb{R}}C(X_{count}^c,\mathbb{R}))$ which maps  $f\in F_2(\mathbb{R},\mathbb{R})$ to $(f|_{C_f^c},f|_{C_f}).$\par  This complete the proof.
\end{proof}

\begin{thm}\label{thm.blockcard}
\begin{eqnarray}
Card([f_i])&=& 1_{[1,10]}(i)\times 2^c+1_{[11,28]}(i)\times c\ \ \ (1\leq i\leq 28).
\end{eqnarray}  
\end{thm}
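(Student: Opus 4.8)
The plan is to derive Theorem \ref{thm.blockcard} as an almost immediate consequence of Lemma \ref{card.blockunion} together with the lower bounds furnished by the explicit representatives in Table \ref{Table3} and the ambient bound $Card(F(\mathbb{R},\mathbb{R}))=2^c$ from Proposition \ref{prop.funcspace.relat}(ii). The statement splits into two cases according to the indicator, so I would handle $1\le i\le 10$ and $11\le i\le 28$ separately.

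For the range $11\le i\le 28$: by Lemma \ref{card.blockunion}(ii), $Card(F_2(\mathbb{R},\mathbb{R}))=Card(\cup_{i=11}^{28}[f_i])=c$. Since each $[f_i]$ is one of the blocks in this union, Proposition \ref{prop.propcard}(ii) (monotonicity of cardinality under inclusion) gives $Card([f_i])\le c$. For the reverse inequality I would exhibit, for each such $i$, a $c$-sized family of pairwise distinct functions all lying in $[f_i]$; the cleanest uniform device is to note that the representative $f_i$ in Table \ref{Table3} is continuous (hence differentiable, if applicable) on an interval and one can add to it any function of the form $\varepsilon\cdot\mathbf{1}_{\{t\}}$ with $\varepsilon\ne 0$ supported at a single point $t$ chosen inside the large ``bad'' set (or, when $C_f^c$ or $D_f^c$ is finite, perturb on the existing finitely many exceptional points by an arbitrary nonzero scalar) — this changes neither $Card(C_f)$ nor $Card(D_f)$, so it stays in $[f_i]$, and it produces continuum-many distinct functions. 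Hence $Card([f_i])=c$.

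For the range $1\le i\le 10$: again by Proposition \ref{prop.propcard}(ii) and Proposition \ref{prop.funcspace.relat}(ii), $Card([f_i])\le Card(F(\mathbb{R},\mathbb{R}))=2^c$. For the lower bound $2^c\le Card([f_i])$ the natural move is to use Remark \ref{CantorUncSubset}: there are $2^c$ subsets $A\subseteq C$ with $Card(A)=c$, and for each representative $f_i$ built with the Cantor set (e.g.\ $D(x)$, $f_C$, $g_C$, and the $\sin$-modulated variants), modifying $f_i$ by the values on such a subset $A$ of the measure-zero, nowhere-dense set $C$ yields $2^c$ functions that all share the same continuity and differentiability cardinalities as $f_i$ — because altering values on a subset of $C$ only moves discontinuity/non-differentiability points around within $C$, which has cardinality $\le c$ and thus cannot change any of the relevant cardinal invariants in the profile of $[f_i]$. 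For the blocks $i=2,3,5,6$ one multiplies by the appropriate polynomial or $\sin$ factor exactly as in the table to force the finite/denumerable continuity count while keeping the $2^c$ freedom on $C$. One must check case by case that each of the ten profiles in rows $1$–$10$ of Table \ref{Table2} is realized by such a Cantor-supported perturbation family; this bookkeeping is the only real content.

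The main obstacle I anticipate is precisely this last verification: one needs that perturbing a representative $f_i$ on an arbitrary $c$-subset $A\subseteq C$ does not accidentally change $Card(C_f)$ or $Card(D_f)$ out of the prescribed cell — in particular that the continuity points of $f_i$ that lie outside $C$ are undisturbed, and that no new continuity/differentiability points are created inside $C$ beyond what the profile allows (all of which follow since $C$ is closed, nowhere dense, of cardinality $c$, so sets of the form $C\cap(\text{something})$ have cardinality $\le c$ and are swept into the ``$c$'' or ``$\aleph_0$''/``$n$''/``$0$'' slots already present). Once that is in place, combining $2^c\le Card([f_i])\le 2^c$ for $i\le 10$ and $c\le Card([f_i])\le c$ for $i\ge 11$ closes the proof; formally this is just the squeeze, and I would present it in one displayed line invoking Proposition \ref{prop.propcard}(ii), Proposition \ref{prop.funcspace.relat}(ii), Remark \ref{CantorUncSubset}, and Lemma \ref{card.blockunion}.
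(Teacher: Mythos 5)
Your overall skeleton --- squeeze each $Card([f_i])$ between an explicit lower-bound family and an upper bound coming from $Card(F(\mathbb{R},\mathbb{R}))=2^c$ for $i\le 10$ and from Lemma \ref{card.blockunion}(ii) for $i\ge 11$ --- is exactly the paper's strategy. But your lower-bound constructions have a genuine gap in the range $7\le i\le 10$. For those blocks the profile demands $Card(C_f)=c$, and the representatives $f_C$, $g_C$, etc.\ have their continuity set contained in the Cantor set $C$ itself (e.g.\ $C_{f_C}=C$). If you now ``modify the values on'' a subset $A\subseteq C$ with $Card(A)=c$, you destroy continuity at the points of $A$ (and, if $A$ is dense in $C$, at every point of $C$, since a bounded jump on a set accumulating at $x$ kills continuity at $x$); the perturbed function can land in block $1$ rather than block $7$. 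So your central claim --- that altering values on a subset of $C$ ``cannot change any of the relevant cardinal invariants'' --- fails precisely for the blocks where you need the $2^c$ count from Remark \ref{CantorUncSubset}. The paper avoids this by not perturbing the fixed representative at all: for each $A\in\mathbb{C}_{unc}$ it builds a \emph{new} function $f_i^A$ (the analogue of $f_C$, $g_C$ with $A$ playing the role of $C$), so that distinct $A$ give distinct functions whose continuity set still has cardinality $c$ by construction. For blocks $1$--$6$ your perturbation idea is salvageable (the continuity sets there are finite or countable and can be avoided inside $C$), though the paper instead injects $F(\mathbb{Q}^c,\mathbb{R}_0^-)$ into $[f_1]$ and then pushes $[f_1]$ into $[f_2],\dots,[f_6]$ via $f\mapsto g_i\Phi(f)$ with $\Phi$ the normal CDF.

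There is also a smaller defect in your lower bound for $11\le i\le 28$: the device ``add $\varepsilon\cdot 1_{\{t\}}$ at a point of the bad set, or perturb on the finitely many exceptional points'' has nothing to act on when $Card(C_f^c)=0$, i.e.\ for blocks $22$--$28$, where the representatives are continuous everywhere; adding a point mass anywhere creates a new discontinuity and moves the function to a different block (e.g.\ from case $7$-$1$ to case $6$-$1$). The fix is trivial and is what the paper does: use the $c$-sized family $p\mapsto pf_i$, $p\in\mathbb{R}^+$, which preserves both $C_f$ and $D_f$ for every block. With those two repairs your argument closes the same way the paper's does.
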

\begin{proof}
\indent First, let $f \in F(\mathbb{Q}^c,\mathbb{R}_{0}^{-})$. Then, we can extend $f$ to $\mathbb{R}$ by $f(\mathbb{Q})=\{1\}$. A straightforward verification shows that $f \in [f_1]$. Thus,  given the mapping $F(\mathbb{Q}^c,\mathbb{R}_{0}^{-}) \mapsto [f_1]$ we have $2^c=Card(F(\mathbb{Q}^c,\mathbb{R}_{0}^{-}))\leq Card([f_1]) \leq Card (F(\mathbb{R},\mathbb{R}))=2^c$. This yields,  $Card([f_1])=2^c$. Next, let $f\in [f_1]$ and define $g_i(.)\ (2\leq i\leq 6)$  by $g_i(x)=(\prod_{k=1}^{n}(x-k)), (\prod_{k=1}^{n}(x-k)^2),\sin(\pi x), (\sin(\pi x)\prod_{k=1}^{n}(x-k)), (\sin^2(\pi x))$ for $(2\leq i\leq 6),$ respectively. Then, using the CDF of the normal distribution $\Phi,$ we have $g_i\Phi(f)\in [f_i]$ for  $(2\leq i\leq 6),$ respectively. Thus,  given the mapping $ [f_1] \mapsto [f_i]$ we have $2^c=Card([f_1])\leq Card([f_i])\leq Card (F(\mathbb{R},\mathbb{R}))=2^c,$ implying $Card([f_i])=2^c$ for  $(2\leq i\leq 6),$ respectively. \par 
\indent Second, let $A\in \mathbb{C}_{unc}$ and consider the modified $f_i^A(.)\ (7\leq i\leq 10)$ in Table \ref{Table3} by $f_i^A(x)=f_A(x), (\prod_{k=1}^{n}(x-1/3^k))f_A(x), (\sin^2(\frac{\pi}{x}))f_A(x), g_A(x)$ for $(7\leq i\leq 10),$ respectively.
 Thus,  given the mapping $ \mathbb{C}_{unc} \mapsto [f_i]$ we have $2^c=Card(\mathbb{C}_{unc})\leq Card([f_i])\leq Card (F(\mathbb{R},\mathbb{R}))=2^c,$ implying $Card([f_i])=2^c$ for  $(7\leq i\leq 10),$ respectively.  \par

\indent Third, let $ p\in \mathbb{R}^{+}.$ Then, using function $f_i\ (11\leq i\leq 28)$ in the Table\ref{Table3} we have $pf_i\in [f_i]\ (11\leq i\leq 28).$ Hence, given the mapping $ \mathbb{R}^{+}  \mapsto [f_i]$ we have $c=Card(\mathbb{R}^{+})\leq Card([f_i]),$ for  $(11\leq i\leq 28),$ respectively. Next, given $[f_i]\subseteq F_2(\mathbb{R},\mathbb{R}),$  for  $(11\leq i\leq 28),$ and  Lemma\ref{card.blockunion} (ii) we have: $Card([f_i])\leq c$ for  $(11\leq i\leq 28),$ respectively.
Accordingly, by the last two inequalities on the cardinals, it follows that: $Card([f_i])=c \ (11\leq i\leq 28).$ In particular, for $[f_{28}]=D(\mathbb{R},\mathbb{R})$ we have  $Card([f_{28}])=c.$ \par
This completes the proof.
\end{proof}

\begin{rema}
The cardinality of $35.7\%(10/28)$  of the blocks is $2^c$ while that of  $64.3\%(18/28)$   of  blocks is $c.$ This indicates that the cardinal number $c$ has almost double frequency of that of cardinal number $2^c$ in representing the size of blocks of the space of real valued functions on the real line.
\end{rema}

\subsection{The Relationship between the Big Four}

In the previous section, we observed that any given function $f\in F(\mathbb{R},\mathbb{R})$ belongs to one of the 28 blocks of its partitions. In particular, the four well-known functions—the Cantor function, Dirichlet function, Thomae function, and the Weierstrass function—each represent one of these blocks. Now, one may wonder how to connect these four key functions. Trivially, the equivalence relation induced by the aforementioned partition is unhelpful in this regard. Hence, we consider an alternative approach. We begin with a definition:\par 
\begin{defi}\label{defconn}
Given two functions $f_1,f_2\in F(\mathbb{R},\mathbb{R}).$ Then, $f_1$ is called connected to $f_2$ denoted by $f_1\overset{conn}{\sim}f_2$ whenever for some function $g\in F(\mathbb{R},\mathbb{R}),$ we have $f_1g\in[f_2].$
\end{defi}

\begin{rema}\label{rema.conn}
Additional restrictions on $g$: When $g=1$ in Definition \ref{defconn}, $\overset{conn}{\sim}$ is transformed into the induced equivalence relation by the above partition. Furthermore, if $g$ is a positive everywhere-differentiable function on $\mathbb{R}$, the aforementioned relation becomes an equivalence relation.
\end{rema}

It is trivial that in Theorem \ref{theorem.partition}, $f_i\overset{conn}{\sim}f_{28}$ for all $(1\leq i\leq 27).$ Figure \ref{FIG1} presents these relationships. As shown, the block of everywhere differentiable functions $[f_{28}]$ is the sink node with the highest in-degree connectivity among all blocks of functions.\par 

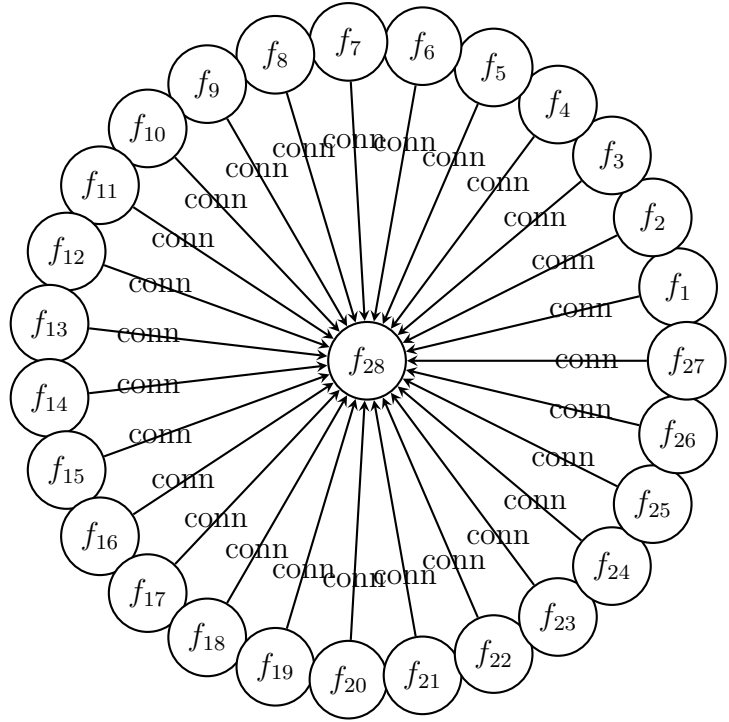
\begin{figure}[H]
\centering
\begin{tikzpicture}[-stealth,thick,scale=1, level distance=8em]
    \node[state,fill=white] (center) at (0,0) {$f_{28}$};
\foreach \phi in {1,...,27}{
\node[state,fill=white] (f_\phi) at (360/27 * \phi:4.25cm){$f_{\phi}$};
         \draw[black] (f_\phi) ->  node[near start]{conn}(center);
    }
\end{tikzpicture} 
\caption{The graphical presentation of the relationship between all representatives of blocks of $F(\mathbb{R},\mathbb{R})$ and $f_{28}$ the identity function.} \label{FIG1}
\end{figure}

As there are $4^{C_{2}^{28}}=3.790327\times 10^{227}$ potential scenarios for the complete graph in Figure \ref{FIG1}, finding the relationships between all nodes of the graph appears to be a tedious and difficult task. However, we can identify the relationships between four of them, i.e., $f_{1}=D, f_{11}=T, f_{22}=W,$ and $f_{25}=C,$ where there are only $4^{C_{2}^{4}}=4096$ potential scenarios. Equipped with Definition \ref{defconn}, we have:\par 

\begin{thm}\label{thmgraph}
Given above notations and definitions we have: 
(i) $W\overset{conn}{\sim}T,$  (ii) $W\overset{conn}{\sim}C,$ (iii) $W\overset{conn}{\sim}D,$ (iv) $T\overset{conn}{\sim}D,$  and, (v) $C\overset{conn}{\sim}D.$ 
\end{thm}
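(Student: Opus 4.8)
The plan is to prove each of (i)--(v) by producing, for Definition~\ref{defconn}, an explicit multiplier $g\in F(\mathbb{R},\mathbb{R})$ and then checking against Table~\ref{Table2} that the product lands in the prescribed block. The mechanism is uniform: for any $f\in F(\mathbb{R},\mathbb{R})$ the product $fg$ is identically $0$ on $f^{-1}(0)$ whatever $g$ is, while on $\{f\neq0\}$ one may force $fg$ to equal any prescribed function $h$ by taking $g=h/f$ there. The three connections to the Dirichlet function, (iii)--(v), all follow from one observation: if $f^{-1}(0)$ has empty interior then $f\overset{conn}{\sim}D$, the witness being $g=D/f$ on $\{f\neq0\}$ and $g\equiv1$ on $f^{-1}(0)$. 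Indeed the resulting product equals $D$ on the dense set $\{f\neq0\}$ and $0$ on $f^{-1}(0)$, hence takes values near both $0$ and $1$ in every neighbourhood of every point, so it is nowhere continuous and a fortiori nowhere differentiable; thus it belongs to $[f_1]$, the block of $D$. This settles (iv) with $f=T$, since $T^{-1}(0)=\mathbb{Q}^{c}$ has empty interior (and in fact the product is literally $1_{\mathbb{Q}}=D$); (iii) with $f=W$, since $W$, being nowhere differentiable, is constant on no interval, so $W^{-1}(0)$ has empty interior; and (v) with $f=C$, provided one uses an extension of the Cantor function to $\mathbb{R}$ whose zero set is countable, hence of empty interior.

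For (ii), $W\overset{conn}{\sim}C$, the target is the block of $C$ in Table~\ref{Table2} (continuous everywhere, $Card(D_f)=Card(D_f^{c})=c$). Take $g$ to be any everywhere-continuous function with $g\equiv0$ on $[0,1]$ and $g\equiv1$ on $[2,3]$, say a piecewise-linear ramp. Then $Wg$ is continuous everywhere (a product of continuous functions), it is identically $0$ on $[0,1]$ and therefore differentiable on a set of cardinality $c$, and it coincides with $W$ on $[2,3]$ and is therefore non-differentiable on a set of cardinality $c$; this is exactly the row of $C$. For (i), $W\overset{conn}{\sim}T$, the target is the block of $T$ ($Card(C_f)=c$, $Card(C_f^{c})=\aleph_0$, $Card(D_f)=0$). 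Since $\{W\neq0\}$ is open and dense, choose $x_k\in(k,k+1)$ with $W(x_k)\neq0$ for each $k\geq1$ and put $g=1+\sum_{k\geq1}1_{\{x_k\}}$. Then $Wg=W+\sum_{k\geq1}W(x_k)1_{\{x_k\}}$ is continuous off the closed discrete countably infinite set $\{x_k\}$ and genuinely jumps at each $x_k$ (because $W(x_k)\neq0$), so $Card(C_{Wg})=c$ and $Card(C_{Wg}^{c})=\aleph_0$; and it is differentiable nowhere, since near any point off $\{x_k\}$ it agrees with $W$ and at each $x_k$ it fails even to be continuous. Hence $Wg$ lies in the block of $T$.

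The main obstacle is precisely the forced vanishing of $fg$ on $f^{-1}(0)$: a connection of $f$ into any block with $Card(C_f)=0$ is outright impossible unless $f^{-1}(0)$ has empty interior, so the crux of (iii)--(v) is to establish this for $W$, $T$, $C$. For $W$ it is a consequence of nowhere differentiability, for $T$ it is immediate, but for the Cantor function it is sensitive to the chosen extension --- the naive extension by constants, identically $0$ on $(-\infty,0]$, would make (v) false --- so one must fix an extension with a small zero set. What remains is routine: for each product constructed above one verifies, with the help of Proposition~\ref{prop.propcard}, that all four cardinalities $Card(C_{fg})$, $Card(C_{fg}^{c})$, $Card(D_{fg})$, $Card(D_{fg}^{c})$ agree with the relevant row of Table~\ref{Table2}.
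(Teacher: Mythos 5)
Your proposal is correct and follows the same overall strategy as the paper: each of (i)--(v) is certified by exhibiting an explicit multiplier $g$ as in Definition~\ref{defconn} and checking the resulting row of Table~\ref{Table2}. The witnesses themselves differ, and in several places yours are the more careful ones. For (iii)--(v) the paper multiplies by the indicator of a fixed countable dense set $E=\bigcup_{n\in\mathbb{Z}}(A+2n)$ with $A=\{\sin 1,\sin 2,\dots\}$; since $E$ consists of irrationals and $T$ vanishes on the irrationals, the paper's product for (iv) is identically zero and does not land in $[f_1]$, whereas your choice $g=D/f$ on $\{f\neq 0\}$ gives literally $Tg=1_{\mathbb{Q}}=D$. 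You also isolate the genuine obstruction --- $fg$ is forced to vanish on $f^{-1}(0)$ --- and hence the fact that (v) depends on which extension of the Cantor function to $\mathbb{R}$ is used, a point the paper does not address (with the constant extension $C\equiv 0$ on $(-\infty,0]$, no multiplier can make $Cg$ nowhere continuous). For (i), your $g=1+\sum_k 1_{\{x_k\}}$ avoids a defect of the paper's $g(x)=x+\sum_n\pi^n 1_{\{\pi^n\}}(x)$: the latter's product agrees with $xW(x)$ near $0$, and $xW(x)$ is differentiable at $0$ (the difference quotient tends to $W(0)=3/2$), violating $Card(D_f)=0$ for the block of $T$. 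The one imprecision on your side is the blanket claim that $f^{-1}(0)$ having empty interior suffices for $f\overset{conn}{\sim}D$ via $g=D/f$: the product takes the value $1$ only on $\mathbb{Q}\cap\{f\neq0\}$, and a dense set need not meet $\mathbb{Q}$ densely (consider $f=1_{\mathbb{Q}^c}$, for which your witness yields the zero function). This is harmless in your three applications, since $\{W\neq0\}$ and $\{C\neq0\}$ are open and dense while $\{T\neq0\}=\mathbb{Q}$, so $\mathbb{Q}\cap\{f\neq0\}$ is dense in each case and the oscillation argument goes through; but the general ``one observation'' should be stated with that extra hypothesis.
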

\begin{proof}
It is sufficient for each case to present the $g$ function in the Definition \ref{defconn} as follows: (i) $g(x)=x+\sum_{n=1}^{+\infty} \pi^n 1_{\{\pi^n\}}(x)$;  (ii) $g(x)=x(x-1)1_{[0,1]}(x)$; (iii)-(v) $g(x)=\sum_{-\infty}^{+\infty} 1_{A+2n}(x): A=(\sin(n))_{n=1}^{+\infty}$ dense in $[-1,1]$.    
\end{proof}

Figure \ref{FIG2} presents a graphical overview of the results in Theorem \ref{thmgraph}. As it is shown, the Weierstrass function (W) is the source-universal node with the highest out-degree connectivity; the Cantor function (C) and the Thomae function (T) are the bridging nodes; and, the Dirichlet function (D) is the sink node with the highest in-degree connectivity.\par    

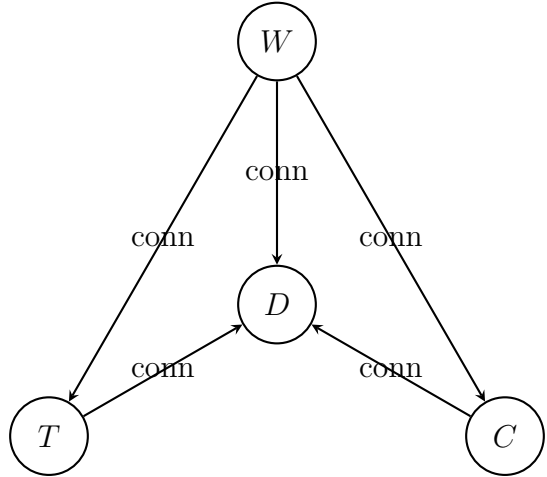
\begin{figure}[H]
\centering
\begin{tikzpicture}[-stealth,thick,scale=1,rotate=-30, level distance=8em]
    \node[state,fill=white] (center) at (0,0) {$D$};
\node[state,fill=white] (W) at (360/3 * 1:3.5cm){$W$};
         \draw[black] (W) -> node{conn}(center);
\node[state,fill=white] (T) at (360/3 * 2:3.5cm){$T$};
         \draw[black] (T) -> node{conn}(center);
\node[state,fill=white] (C) at (360/3 * 3:3.5cm){$C$};
         \draw[black] (C) -> node{conn}(center);
\draw[black] (W) -> node{conn}(T);
\draw[black] (W) -> node{conn}(C);         
\end{tikzpicture}

\caption{The graphical presentation of the relationship between well-known functions: the Cantor function (C), the Dirichlet function (D), the Thomae function (T), and, the Weierstrass function (W).} \label{FIG2}
\end{figure}

\section{Discussion}

\subsection{Summary \& Contributions}
This work presents a finite partition of the function spaces of all real-valued functions on $\mathbb{R}$ based on cardinality, continuity, and differentiability, along with constructive examples representing each block of the partition. In particular, it shows that the well-known Cantor function, Dirichlet function, Thomae function, and the Weierstrass function each represent a unique block of this partition. An additional aspect adding more importance to these four functions is that they collectively appear in representation of almost two-thirds of the blocks. Furthermore, the concept of \emph{connection} among real-valued functions is introduced, and the unique connection relation between the aforementioned functions was investigated.\par 
Finally, this work findings adds more prominence to the Cantor set $C$ as well. While it has a remarkable presence in the construction of the representative functions of blocks, its size (e.g., Cardinal number $c$) has the highest presence in the size of representative blocks.\par 

\subsection{Limitations \& Future Work}
The limitations of this work are clear, and they open up new perspectives for further investigations. Firstly, we merged the cardinal number of all finite subsets of $\mathbb{R}$ with given symbol $n.$ While this inaccuracy is a minimal price to pay for enabling the creation of the aforementioned finite partition, it should be noted. Secondly, the presented graph in Figure \ref{FIG1} needs to be completed for all its involved nodes. Thirdly, the \emph{connection} relation in Definition \ref{defconn} is not an equivalence, making it suboptimal. One open problem in this regard is investigating the results in Figure \ref{FIG1} and Figure \ref{FIG2} when considering the equivalence relations mentioned in Remark \ref{rema.conn}. Finally, it is worth exploring how the presented partition in this work changes in terms of the number of blocks and the representative function for each block when one replaces some key features, such as continuity and differentiability, with other properties of real-valued functions on the real line, such as integrability, measurability, etc.\par

\subsection{Conclusion}
This work presented a constructive description of the function space of all real-valued functions on $\mathbb{R}$ in terms of four concepts: partition, cardinality, continuity, and differentiability. Additionally, it established a special relationship between the well-known representative functions of four of the blocks.\par


\end{document}